\theoremstyle{definition}
\newtheorem{thm}{Theorem}[section]
\newtheorem{prop}[thm]{Proposition}
\newtheorem{example}[thm]{Example}
\newtheorem{conj}[thm]{Conjecture}
\newtheorem{defn}[thm]{Definition}
\newtheorem{cor}[thm]{Corollary}
\newtheorem{question}[thm]{Question}
\DeclareMathOperator{\Av}{Av}
\DeclareMathOperator{\Des}{Des}
\DeclareMathOperator{\des}{des}
\DeclareMathOperator{\inv}{inv}
\DeclareMathOperator{\maj}{maj}
\DeclareMathOperator{\Exc}{Exc}
\DeclareMathOperator{\exc}{exc}
\DeclareMathOperator{\Inv}{Inv}
\DeclareMathOperator{\lcm}{lcm}
\DeclareMathOperator{\st}{st}
\DeclareMathOperator{\std}{std}
\DeclarePairedDelimiter{\multiset}{\{\!\{}{\}\!\}}
\newcommand{\NN}{\mathbb{N}}
\newcommand{\ZZ}{\mathbb{Z}}
\newcommand{\symm}{\mathfrak{S}}
\begin{document}

\title{Width-$k$ Generalizations of Classical Permutation Statistics}
\author{Robert Davis}

\begin{abstract}
	We introduce new natural generalizations of the classical descent and inversion statistics for permutations, called \emph{width-$k$ descents} and \emph{width-$k$ inversions}.
	These variations induce generalizations of the excedance and major statistics, providing a framework in which the most well-known equidistributivity results for classical statistics are paralleled.
	We explore additional relationships among the statistics providing specific formulas in certain special cases.
	Moreover, we explore the behavior of these width-$k$ statistics in the context of pattern avoidance. 
\end{abstract}

\thanks{Department of Mathematics, Michigan State University, East Lansing, MI 48824-1027, USA. E-mail: \href{mailto:davisr@math.msu.edu}{\nolinkurl{davisr@math.msu.edu}}}
\maketitle

\section{Introduction}

Let $\symm_n$ denote the set of permutations $\sigma = a_1\cdots a_n$ of $[n]$, and let $\symm = \symm_1 \cup \symm_2 \cup \dots$.
A function $\st: \symm_n \to \NN$ is called a \emph{statistic}, and the systematic study of permutation statistics is generally accepted to have begun with MacMahon \cite{MacMahonCA}. 
In particular, four of the most well-known statistics are the \emph{descent}, \emph{inversion}, \emph{major}, and \emph{excedance} statistics, defined respectively by
\begin{eqnarray*}
	\des \sigma &=& | \{i \in [n-1]\ |\ a_i > a_{i+1} \}| \\
	\inv \sigma &=& | \{(i,j) \in [n]^2\ |\ i < j \text{ and } a_i > a_j \}| \\
	\maj \sigma &=& \sum_{i \in \Des \sigma} i \\
	\exc \sigma &=& |\{ i \in [n]\ |\ a_i > i\}|
\end{eqnarray*}
where $\Des \sigma = \{i \in [n-1]\ |\ a_i > a_{i+1}\}$.

Given any statistic $\st$, one may form the generating function
\[
	F_n^{\st}(q) = \sum_{\sigma \in \symm_n} q^{\st \sigma}.
\]
A famous result due to MacMahon \cite{MacMahonCA} states that $F_n^{\des}(q) = F_n^{\exc}(q)$, and that both equal are to the \emph{Eulerian polynomial} $A_n(q)$.
The Eulerian polynomials themselves may be defined via the identity
\[
	\sum_{j \geq 0} (1+j)^nq^j = \frac{A_n(q)}{(1-q)^{n+1}}.
\]
Another well-known result, due to MacMahon \cite{MacMahonMaj} and Rodrigues \cite{RodriguesInv}, states $F_n^{\inv}(q) = F_n^{\maj}(q) = [n]_q!$
Foata famously provided combinatorial proofs of the equidistributivity of each; see \cite{Rearrangements} for a treatment of these.
Thus any statistic $\st$ for which $F_n^{\st}(q) = A_n(q)$ is called \emph{Eulerian}, and if $F_n^{\st}(q) = [n]_q!$ then $\st$ is called \emph{Mahonian}.
These four statistics have many generalizations; in this article, we discuss new variations, induced from a simple generalization of $\des$.

For each of the following definitions, we assume $n \in \ZZ_{>0}$, $k \in [n-1]$, $\emptyset \neq K \subseteq [n-1]$, and $\sigma = a_1a_2\ldots a_n \in \symm_n$. 
We define a \emph{width-$k$ descent} of $\sigma$ to be an index $i \subseteq [n-k]$ for which $a_i > a_{i+k}$.
Thus the width-$1$ descents are the usual descents of a permutation.
Denote the set of all width-$k$ descents of $\sigma$ by 
\[
	\Des_k(\sigma) = \{i \in [n-k] \mid a_i > a_{i+k}\},
\]
and set 
\[
	\des_k(\sigma) = |\Des_k(\sigma)|.
\]

If one is interested in descents of $\sigma$ of various widths, first let $K \subseteq [n-1]$ denote the set of widths under consideration.
Now, we define $\Des_K(\sigma)$ to be the multiset $\bigcup_{k\in K} \Des_k(\sigma)$, and $\des_K(\sigma) = |\Des_K(\sigma)|$.

Now, define a \emph{width-$k$ inversion} of $\sigma$ to be a pair $(i,j)$ for which $a_i > a_j$ and $j - i = mk$ for some positive integer $m$.
Denote the set of width-$k$ inversions by
\[
	\Inv_k(\sigma) = \{ (i,j) \in [n]^2 \mid a_i > a_j \text{ and } j - k = mk, k \in \ZZ\}
\]
and set
\[
	\inv_k(\sigma) = |\Inv_k(\sigma)|.
\]
Again, one may be interested in width-$k$ inversions for multiple values of $k$, so for $K \subseteq [n-1]$ define $\Inv_K(\sigma) = \bigcup_{k \in K} \Inv_k(\sigma)$.
Additionally, let $\inv_K(\sigma) = |\Inv_K(\sigma)|$.

\begin{example}\label{ex:first example}
	If $\sigma = 4136572$, then
	\[
		\Des_{\{2,3\}}(\sigma) = \{1,4,5\} \text{ and } \Inv_{\{2,3\}}(\sigma) = \{(1,3),(1,7),(3,7),(4,7),(5,7)\}.
	\]
	Thus, $\des_{\{2,3\}}(\sigma) = 3$, $\inv_{\{2,3\}}(\sigma) = 5$.
\end{example}

As we will see in the next section, the above definitions motivate us to generalize $\exc$ and $\maj$ in such a way that well-known known relationships among $\des,\inv,\maj,\exc$ are paralleled.
However, it is not very convenient to work with $\exc_K$ and $\maj_K$ directly.
So, this paper will focus much more on $\des_K$ and $\inv_K$.

The main focus of this paper is to explore these new statistics and their relationships among each other. 
While they are well-behaved for special cases of $K \subseteq [n-1]$, formulas for more general cases have been more elusive.
Section~\ref{sec:avoidance} continues this exploration by considering the same statistics for classes of permutations avoiding a variety of patterns.


\section{Main Results}

We begin with simple expressions for $\des_K(\sigma)$ and $\inv_K(\sigma)$ for a fixed $\sigma \in \symm_n$.
First, we point out that if $K \subseteq [n-1]$, $j \in K$ and $ij \in K$ for some positive integer $i$, then for any $\sigma \in \symm_n$, $\inv_K(\sigma) = \inv_{K \setminus \{ij\}}(\sigma)$.
This occurs because for any $k \in [n-1]$, $\inv_k(\sigma)$ counts all descents whose widths are multiples of $k$.
Thus $ij$ is already accounted for in $\inv_{K \setminus \{j\}}(\sigma)$.
This follows quickly from the definitions of $\inv_k$ and $\des_k$.

\begin{prop}
	For any nonempty $K \subseteq [n-1]$,
	\[
		\inv_K(\sigma) = \sum_{\emptyset \subsetneq K' \subseteq K} (-1)^{|K'| + 1}\inv_{\lcm(K')}(\sigma),
	\]
	where we set $\inv_{\lcm(K')}(\sigma) = 0$ if $\lcm(K') \geq n$.
\end{prop}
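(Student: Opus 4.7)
The plan is to apply ordinary inclusion–exclusion to the union
\[
\Inv_K(\sigma) = \bigcup_{k \in K} \Inv_k(\sigma),
\]
and then identify each intersection appearing in the expansion with a single width-$k$ inversion set, where $k$ is the least common multiple of the widths involved.

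First I would record the key set-theoretic observation already noted before the proposition: a pair $(i,j)\in [n]^2$ with $a_i > a_j$ lies in $\Inv_k(\sigma)$ if and only if $j-i$ is a positive multiple of $k$. In particular, $\Inv_k(\sigma)$ automatically contains the pairs of widths $2k, 3k, \dots$, and this is precisely what allows the LCM to appear.

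Second, inclusion–exclusion applied to the finite family $\{\Inv_k(\sigma)\}_{k \in K}$ gives
\[
\inv_K(\sigma) \;=\; \Bigl|\bigcup_{k \in K}\Inv_k(\sigma)\Bigr| \;=\; \sum_{\emptyset \subsetneq K' \subseteq K} (-1)^{|K'|+1}\Bigl|\bigcap_{k \in K'} \Inv_k(\sigma)\Bigr|.
\]
Third, I would show the pointwise identity $\bigcap_{k \in K'}\Inv_k(\sigma) = \Inv_{\lcm(K')}(\sigma)$. Indeed, $(i,j)$ belongs to every $\Inv_k(\sigma)$ for $k \in K'$ exactly when $a_i > a_j$ and $j-i$ is a positive common multiple of all $k\in K'$, equivalently a positive multiple of $\lcm(K')$. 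In the degenerate case $\lcm(K') \geq n$, there is no valid pair $(i,j) \in [n]^2$ with $j-i$ a positive multiple of $\lcm(K')$, so the intersection is empty, matching the convention $\inv_{\lcm(K')}(\sigma) = 0$.

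There is no substantial combinatorial obstacle here; the main care needed is bookkeeping. In particular, the expression $\Inv_K(\sigma) = \bigcup_{k\in K}\Inv_k(\sigma)$ must be interpreted as an ordinary set union (rather than a multiset union) for the cardinality $\inv_K(\sigma)$ to agree with the right-hand side of the claimed formula, since inclusion–exclusion precisely corrects for the overcounting that would otherwise occur when the same pair $(i,j)$ is a width-$k$ inversion for several different $k \in K$. Once this is noted and the LCM identification is established, the proposition follows immediately.
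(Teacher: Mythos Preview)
Your proposal is correct and follows essentially the same route as the paper: both apply inclusion--exclusion to the union $\bigcup_{k\in K}\Inv_k(\sigma)$ and identify each intersection $\bigcap_{k\in K'}\Inv_k(\sigma)$ with $\Inv_{\lcm(K')}(\sigma)$. Your write-up is in fact tidier than the paper's, which spends a paragraph on the singleton case $K=\{k\}$ (deriving $\inv_k(\sigma)=\sum_{j\geq 1}\des_{jk}(\sigma)$) before sketching the inclusion--exclusion informally; your explicit verification of the intersection identity and your remark that the union must be read as a set (not multiset) union are exactly the bookkeeping the paper glosses over.
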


\begin{proof}
	We first consider the case where $K = \{k\}$.
	The elements of $\Inv_k(\sigma)$ are pairs of the form $(i,i+jk)$ for some positive integer $j$.
	Such an element exists if and only if there is a width-$(jk)$ descent of $\sigma$ at $i$.
	Thus, $\inv_k(\sigma)$ simply counts the number of width-$jk$ descents of $\sigma$ for all possible $j$.
	This leads to the equality
	\[
		\inv_k(\sigma) = \sum_{j \geq 1} \des_{jk}(\sigma).
	\]	
	The formula for general $K$ then follows from inclusion-exclusion:
	by adding the number of all width-$k$ inversions for all $k \in K$, we are twice counting any instances of a width-$\lcm(k_1,k_2)$ inversion since such an inversion is also of widths $k_1$ and $k_2$.
	If $K$ contains three distinct elements $k_1,k_2,k_3$, then $\lcm(k_1,k_2,k_3)$ would have been added three times (once for each $\inv_{k_i}(\sigma)$) and subtracted three times (once for each $\inv_{\{k_i,k_j\}}(\sigma)$, $i < j$), so it must be added again for the sum.
	Extending this argument to range over larger subsets of $K$ results in the claimed formula.
\end{proof}

\begin{example}
	Let us return to $\sigma = 4136572$. We saw from Example~\ref{ex:first example} that $\inv_{\{2,3\}}(\sigma) = 5$,
	where four inversions have width $2$ and two have width $3$.
	But since the inversion $(1,7)$ has width both $2$ and $3$, it must also have width $\lcm(2,3) = 6$.
	So, it contributes two summands of $1$ and one summand of $-1$.
\end{example}

We come now to a function which helps demonstrate the interactions among the width-$k$ statistics.
Let $n$ and $k$ be positive integers for which $n = dk + r$ for some $d,r \in \ZZ$ with $0 \leq r < k$.
To each $\sigma = a_1\dots a_n \in \symm_n$ we may then associate the set of disjoint substrings
$\beta_{n,k}(\sigma) = \{\beta_{n,k}^1(\sigma),\dots, \beta_{n,k}^k(\sigma)\}$ where
\[
	\beta_{n,k}^i(\sigma) = \begin{cases}
						a_ia_{i+k}a_{i+2k}\dots a_{i + dk} & \text{ if } i \leq r\\
						a_ia_{i+k}a_{i+2k}\dots a_{i + (d-1)k} & \text{ if } r < i < k
					\end{cases}
\]
Now, define
\[
	\phi: \symm_n \to \symm_{d+1}^r \times \symm_d^{k-r}
\]
by setting $\phi(\sigma) = (\std \beta_{n,k}^1(\sigma), \dots, \std \beta_{n,k}^k(\sigma))$, where $\std$ is the \emph{standardization} map, that is,  the permutation obtained by replacing the 
smallest element of $\sigma$ with $1$, the second-smallest element with $2$, etc.
Note in particular that each $\std \beta_{n,k}^i(\sigma)$ is a permutation of $[d+1]$ or $[d]$.

The first of the identities in the following proposition was originally established in \cite{SackUlfarsson}, though with slightly different notation. 
We provide an alternate proof and extend their result to width-$k$ inversions.
 
\begin{thm}\label{thm:main bijection}
	Let $n$ and $k$ be positive integers such that $n = dk + r$, where $0 \leq r < k$, and let $A_i(q)$ denote the $i^{th}$ Eulerian polynomial. 
	Also let $M_{n,k}$ denote the multinomial coefficient
	\[
		M_{n,k} =  {n \choose (d+1)^r,d^{k-r}}
	\]
	where $i^j$ indicates $i$ repeated $j$ times.
	We then have the identities
	\begin{eqnarray*}
		F_n^{\des_k}(q) &=& M_{n,k}A_{d+1}^r(q)A_d^{k-r}(q)\\
		F_n^{\inv_k}(q) &=&  M_{n,k}[d+1]_q^r![d]_q^{k-r}!
	\end{eqnarray*}
\end{thm}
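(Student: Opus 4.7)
The plan is to construct a bijection
\[
\Phi: \symm_n \longleftrightarrow \mathcal{P}_{n,k} \times \symm_{d+1}^r \times \symm_d^{k-r}
\]
that refines the map $\phi$, where $\mathcal{P}_{n,k}$ denotes the set of ordered partitions of $[n]$ into $k$ blocks, the first $r$ of size $d+1$ and the remaining $k-r$ of size $d$. Since $|\mathcal{P}_{n,k}| = M_{n,k}$, both identities will follow once I check that $\des_k$ and $\inv_k$ decompose additively across the $k$ factor permutations produced by $\Phi$.

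First I would describe $\Phi$ explicitly. Given $\sigma \in \symm_n$, let $S_i$ be the set of values appearing in the substring $\beta_{n,k}^i(\sigma)$, and let $\pi_i = \std\,\beta_{n,k}^i(\sigma)$. Then $(S_1,\ldots,S_k) \in \mathcal{P}_{n,k}$ and $(\pi_1,\ldots,\pi_k) \in \symm_{d+1}^r \times \symm_d^{k-r}$. The inverse places the $\pi_i(j)$-th smallest element of $S_i$ into position $i + (j-1)k$, recovering $\sigma$ uniquely, and checking that these maps are mutually inverse is routine from the definitions.

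Next comes the key additivity property. A width-$k$ descent of $\sigma$ at index $i$ means $a_i > a_{i+k}$ with $a_i$ and $a_{i+k}$ adjacent entries of the common substring $\beta_{n,k}^{i'}(\sigma)$, where $i' \in [k]$ satisfies $i \equiv i' \pmod{k}$; that is, it records a classical descent of that substring. Since $\std$ preserves descents,
\[
\des_k(\sigma) = \sum_{i=1}^k \des(\pi_i).
\]
Similarly, a pair $(i,j)$ contributes to $\inv_k(\sigma)$ exactly when $j - i$ is a positive multiple of $k$, forcing $a_i$ and $a_j$ into the same substring; hence
\[
\inv_k(\sigma) = \sum_{i=1}^k \inv(\pi_i),
\]
again using that $\std$ preserves inversions.

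The generating functions then factor. For $\des_k$, MacMahon's Eulerian equidistribution $F_n^{\des}(q) = A_n(q)$ gives
\[
F_n^{\des_k}(q) = |\mathcal{P}_{n,k}| \cdot A_{d+1}(q)^r \cdot A_d(q)^{k-r} = M_{n,k}\, A_{d+1}(q)^r\, A_d(q)^{k-r},
\]
and analogously the Mahonian identity $F_n^{\inv}(q) = [n]_q!$ yields
\[
F_n^{\inv_k}(q) = M_{n,k}\, ([d+1]_q!)^r\, ([d]_q!)^{k-r}.
\]
The main obstacle is largely bookkeeping: I must verify that the first $r$ substrings are precisely those of length $d+1$ (so the exponents in $M_{n,k}$ match up correctly) and that each width-$k$ descent and each width-$k$ inversion is assigned to exactly one substring under the decomposition. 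Beyond this, the argument reduces to familiar Eulerian and Mahonian identities applied factor by factor.
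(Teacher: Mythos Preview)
Your proposal is correct and follows essentially the same approach as the paper: the paper observes that $\phi$ is $M_{n,k}$-to-one (the fibers being indexed by your $\mathcal{P}_{n,k}$) and that $\des_k(\sigma)=\sum_i \des(\std\,\beta_{n,k}^i(\sigma))$, then factors the generating function exactly as you do. Your refinement of $\phi$ to an explicit bijection $\Phi$ by recording the value sets $S_i$ is just a repackaging of the paper's fiber count, and your treatment of $\inv_k$ is likewise identical to the paper's ``completely analogously'' remark.
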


\begin{proof}
	Let $k \in [n-1]$ and consider $\phi$ defined above.
	Note that $\phi$ is an $M_{n,k}$-to-one function since, given $(\sigma_1,\dots,\sigma_k) \in \symm_{d+1}^r \times \symm_d^{k-r}$, there are $M_{n,k}$ ways to partition 
	$[n]$ into the subsequences $\beta_{n,k}^i(\sigma)$ such that $\std \beta^i_{n,k}(\sigma) = \sigma_i$ for all $i$.
	Also note that
	\[
		\des_k(\sigma) = \sum_{i=1}^k \des(\std \beta^i_{n,k}(\sigma)).
	\]
	Thus,
	\begin{eqnarray*}
		F_n^{\inv_k}(q)&=&\sum_{\sigma\in\symm_n} q^{\des_k\sigma} \\
					&=& M_{n,k}\left(\sum_{(\sigma_1,\dots,\sigma_k)\in\symm_{d+1}^r \times \symm_d^{k-r}}  q^{\des\sigma_1}\dots q^{\des\sigma_k}\right)\\
					&=& M_{n,k}A_{d+1}^r(q)A_d^{k-r}(q).
	\end{eqnarray*}
	This proves the first identity.
		
	The second identity follows completely analogously, with the main modification being that an element of $\Inv_k(\sigma)$ corresponds to a usual inversion in some unique $\std \beta_{n,k}^j(\sigma)$.
\end{proof}

Given $\des_K$ and $\inv_K$, one must wonder what the corresponding generalizations of $\exc$ and $\maj$ are whose relationships with $\des_K$ and $\inv_K$ parallel that of the classical statistics.
To do this, we define the multiset
\[
	\Exc_K(\sigma) = \bigcup_{k \in K} \biguplus_{i = 1}^k \multiset{j \in [n-1] \mid \lceil j/k \rceil \in \Exc(\std \beta_{n,k}^i(\sigma))}
\]
and set $\exc_K(\sigma) = | \Exc_K(\sigma)|$, 
and also set
\[
	\maj_K(\sigma) = \sum_{k \in K} \sum_{i \in \Des_k(\sigma)} \left\lceil \frac{i}{k} \right\rceil.
\]
We could equivalently state that
\[
	\sum_{i=1}^k \maj(\std \beta^i_{n,k}(\sigma)) \text{ and } \exc_K(\sigma) =  \sum_{i=1}^k \exc(\std \beta^i_{n,k}(\sigma)).
\]
These are exactly the definitions needed in order to obtain identities that parallel $F_n^{\des}(q) = F_n^{\exc}(q)$ and $F_n^{\inv}(q) = F_n^{\maj}(q)$, as we will soon see.

One important distinct to make between $\exc$ and $\exc_K$ is the following.
If $\sigma = a_1a_2\dots a_n$ and $\tau = b_1b_2\dots b_n$, then even if $a_i = b_i$ for some $i$, one cannot say $i \in \Exc_K(\sigma)$ if and only if $i \in \Exc_K(\tau)$.
For example, if $\sigma = 4136572$, then $1 \in \Exc_2(\sigma)$, but if $\tau = 4153627$ then $1 \notin \Exc_2(\tau)$.

\begin{example}
	Again let $\sigma = 4136572$. 
	We then have
	\[
		\exc_{\{2,3\}}(\sigma) = |\{1,3,1,2\}| = 4
	\]
	and
	\[
		\maj_{\{2,3\}}(\sigma) = \left\lceil \frac{1}{2} \right\rceil + \left\lceil \frac{5}{2} \right\rceil + \left\lceil \frac{4}{3} \right\rceil = 6.
	\]
\end{example}

By constructing a nearly identical argument as in the proof Theorem~\ref{thm:main bijection}, and using the facts that $F_n^{\des}(q) = F_n^{\exc}(q)$ and $F_n^{\inv}(q) = F_n^{\maj}(q)$, we have the following corollary.

\begin{cor}
	The identities $F_n^{\des_k}(q) = F_n^{\exc_k}(q)$ and $F_n^{\inv_k}(q) = F_n^{\maj_k}(q)$ hold.
\end{cor}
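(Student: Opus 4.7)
The plan is to mirror the argument of Theorem~\ref{thm:main bijection} verbatim, replacing $\des$ and $\inv$ on each standardized substring $\std \beta_{n,k}^i(\sigma)$ with their classical partners $\exc$ and $\maj$, and then applying MacMahon's identities $F_m^{\des}(q) = F_m^{\exc}(q) = A_m(q)$ and $F_m^{\inv}(q) = F_m^{\maj}(q) = [m]_q!$ at the very end.

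First I would record the two decomposition identities
\[
    \exc_k(\sigma) = \sum_{i=1}^k \exc\!\left(\std \beta^i_{n,k}(\sigma)\right)
    \quad \text{and} \quad
    \maj_k(\sigma) = \sum_{i=1}^k \maj\!\left(\std \beta^i_{n,k}(\sigma)\right),
\]
both of which are essentially built into the definitions of $\Exc_K$ and $\maj_K$ given just above the corollary. For $\maj_k$ the key observation is that an index $i \in \Des_k(\sigma)$ corresponds to a classical descent at position $\lceil i/k \rceil$ in the unique substring $\std \beta^j_{n,k}(\sigma)$ containing positions $i$ and $i+k$, and the summand $\lceil i/k \rceil$ is exactly this classical descent position. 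For $\exc_k$ the multiset definition already expresses $\Exc_k(\sigma)$ as a disjoint union indexed by $i \in [k]$, and each fiber has size $\exc(\std \beta^i_{n,k}(\sigma))$.

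With these identities in hand, the rest is a direct copy of the calculation in Theorem~\ref{thm:main bijection}. Since $\phi$ is $M_{n,k}$-to-one onto $\symm_{d+1}^r \times \symm_d^{k-r}$,
\begin{align*}
    F_n^{\exc_k}(q)
        &= \sum_{\sigma \in \symm_n} q^{\exc_k \sigma}
         = M_{n,k} \sum_{(\sigma_1,\dots,\sigma_k) \in \symm_{d+1}^r \times \symm_d^{k-r}} q^{\exc \sigma_1} \cdots q^{\exc \sigma_k} \\
        &= M_{n,k} F_{d+1}^{\exc}(q)^r F_d^{\exc}(q)^{k-r}
         = M_{n,k} A_{d+1}^r(q) A_d^{k-r}(q),
\end{align*}
where the last step uses MacMahon's equidistribution of $\des$ and $\exc$. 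By Theorem~\ref{thm:main bijection}, this last expression equals $F_n^{\des_k}(q)$. The $\maj_k$ identity is proved identically, with $[m]_q!$ replacing $A_m(q)$ and the classical $F_m^{\inv}(q) = F_m^{\maj}(q)$ replacing $F_m^{\des}(q) = F_m^{\exc}(q)$.

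The only real obstacle is the bookkeeping for the two decomposition identities, particularly the $\maj_k$ one, where one must verify that the ceiling $\lceil i/k \rceil$ in the definition of $\maj_K$ is precisely the classical descent position within the correct substring $\std \beta^j_{n,k}(\sigma)$; this is a matter of tracking indices rather than introducing any new combinatorial idea.
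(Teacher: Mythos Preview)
Your proposal is correct and follows exactly the approach the paper intends: the paper's entire proof is the one-sentence remark that the corollary follows ``by constructing a nearly identical argument as in the proof of Theorem~\ref{thm:main bijection}, and using the facts that $F_n^{\des}(q) = F_n^{\exc}(q)$ and $F_n^{\inv}(q) = F_n^{\maj}(q)$,'' and the decomposition identities you invoke are precisely the ones the paper records just before stating the corollary. You have simply written out what the paper leaves implicit.
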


Now that we have established the analogous parallels between the four classical statistics and their width-$k$ counterparts, we wish to explore what other structure is present.
A simple proposition relates $\des_k$ and $\inv_k$ when $k$ is large.

\begin{cor}
	For all $k \geq n/2$, $F_n^{\des_k}(q) = F_n^{\inv_k}(q)$.
\end{cor}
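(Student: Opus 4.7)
The plan is to invoke Theorem~\ref{thm:main bijection} and show that under the hypothesis $k \geq n/2$ both generating functions collapse to the same explicit polynomial. First I would analyze the constraints on the quotient and remainder in $n = dk + r$, with $0 \leq r < k$. Since $k \in [n-1]$ forces $d \geq 1$, and $2k \geq n$ forces $d \leq 2$ (with $d = 2$ only possible when $r = 0$), there are just two cases: either $(d,r) = (1, n-k)$ when $k > n/2$, or $(d,r) = (2, 0)$ when $k = n/2$.

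Next I would record the small-case coincidence $A_m(q) = [m]_q!$ for $m \leq 2$. This is immediate from the definitions, since every permutation of length at most $2$ satisfies $\des \sigma = \inv \sigma$; concretely $A_1(q) = [1]_q! = 1$ and $A_2(q) = [2]_q! = 1 + q$. Equivalently, each block $\std \beta_{n,k}^i(\sigma)$ lives in $\symm_1$ or $\symm_2$ under our hypothesis on $k$, so the per-block contributions to $\des_k$ and $\inv_k$ have identical distributions.

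With these two observations, plugging into Theorem~\ref{thm:main bijection} finishes the proof. In the case $(d,r) = (1, n-k)$ one obtains
\[
    F_n^{\des_k}(q) \;=\; M_{n,k}\, A_2(q)^{n-k} A_1(q)^{2k-n} \;=\; M_{n,k}(1+q)^{n-k},
\]
and likewise $F_n^{\inv_k}(q) = M_{n,k}\,[2]_q!^{\,n-k}[1]_q!^{\,2k-n} = M_{n,k}(1+q)^{n-k}$. In the case $(d,r) = (2,0)$ both expressions reduce to $M_{n,k}(1+q)^{k}$ for the same reason.

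I do not expect any real obstacle: the statement is essentially a bookkeeping consequence of Theorem~\ref{thm:main bijection} combined with the trivial identification $A_m = [m]_q!$ for $m \leq 2$. The only thing to be careful about is handling the boundary case $k = n/2$ separately from $k > n/2$, since $d$ jumps from $1$ to $2$ there; but in the $d = 2$ case the forced value $r = 0$ eliminates the $A_{d+1}$ and $[d+1]_q!$ factors, so no larger Eulerian polynomial ever appears.
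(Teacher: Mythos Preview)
Your argument is correct. You correctly split into the cases $(d,r)=(1,n-k)$ and $(d,r)=(2,0)$, and the identification $A_m(q)=[m]_q!$ for $m\le 2$ is exactly what makes the two formulas from Theorem~\ref{thm:main bijection} agree.

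The paper's proof rests on the same underlying observation---that for $k\ge n/2$ each block $\beta_{n,k}^i(\sigma)$ has length at most two---but uses it more directly and obtains a slightly stronger conclusion. Rather than invoking Theorem~\ref{thm:main bijection}, the paper simply notes that in a word of length at most two a descent and an inversion are literally the same pair, so $\Des_k(\sigma)$ and $\Inv_k(\sigma)$ coincide \emph{as sets} for every $\sigma\in\symm_n$; in particular $\des_k(\sigma)=\inv_k(\sigma)$ pointwise, not just in distribution. Your route through the generating-function identities is a correct but more elaborate way to reach the weaker conclusion; your parenthetical remark about blocks living in $\symm_1$ or $\symm_2$ is already the whole proof, once upgraded from ``identical distributions'' to ``identical statistics.''
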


\begin{proof}
	Since $k \geq n/2$, the sets $\beta_{n,k}^i(\sigma)$ contain at most two elements.
	So, width-$k$ descents and width-$k$ inversions of $\sigma$ are identical.
\end{proof}

We now show that interesting behavior occurs when considering the function
\[
	G_{n,k}(q) = \sum_{\sigma \in \symm_n} q^{\des_k(\sigma) - \des_{n-k}(\sigma)}.
\]
According to computational data, the following conjecture holds for all $n \leq 9$ and $1 \leq k < n$ for which $\gcd(k,n) = 1$.

\begin{conj}
	If $\gcd(k,n) = 1$, then $G_{n,k}(q) = nq^{1-k}A_{n-1}(q)$.
\end{conj}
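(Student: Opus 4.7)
The plan is to recast $\des_k - \des_{n-k}$ as a shifted \emph{cyclic} descent statistic, and then use the coprimality hypothesis to reduce to the classical cyclic descent count. For $\sigma = a_1 \cdots a_n \in \symm_n$ I introduce the cyclic analogue
\[
\widetilde{\des}_k(\sigma) = |\{i \in [n] : a_i > a_{\tau_k(i)}\}|,
\]
where $\tau_k(i) = i+k$ if $i+k \leq n$ and $\tau_k(i) = i+k-n$ otherwise.

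First I would verify the key identity $\widetilde{\des}_k(\sigma) = \des_k(\sigma) + k - \des_{n-k}(\sigma)$. For $i \in [n-k]$ the defining condition is exactly that of $\Des_k(\sigma)$, while for $i \in \{n-k+1,\dots,n\}$, writing $j = i-(n-k) \in [k]$, the condition $a_i > a_{i+k-n}$ becomes $a_{j+(n-k)} > a_j$, i.e., $j \notin \Des_{n-k}(\sigma)$. This gives $G_{n,k}(q) = q^{-k}\sum_{\sigma} q^{\widetilde{\des}_k(\sigma)}$, so the conjecture reduces to showing $\sum_{\sigma \in \symm_n} q^{\widetilde{\des}_k(\sigma)} = nqA_{n-1}(q)$.

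Next I would exploit $\gcd(k,n)=1$ to construct the bijection $\sigma \mapsto \sigma'$ of $\symm_n$ given by $b_i = a_{\pi(i)}$, where $\pi(i) = ((i-1)k \bmod n) + 1$; this is a permutation of $[n]$ precisely because $k$ is a unit mod $n$. A direct check shows $\pi(i+1) = \tau_k(\pi(i))$ (with $\pi(n+1) := \pi(1)$), so consecutive positions of $\sigma'$ correspond to positions exactly $k$ apart cyclically in $\sigma$. Consequently $\widetilde{\des}_1(\sigma') = \widetilde{\des}_k(\sigma)$, and summing over $\sigma \in \symm_n$ replaces $\widetilde{\des}_k$ with the ordinary cyclic descent number $\widetilde{\des}_1$.

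Finally I would apply the standard rotation argument: $\widetilde{\des}_1$ is invariant under cyclic rotation of one-line notation, and the rotation action is free on $\symm_n$ since the entries are distinct, partitioning $\symm_n$ into $(n-1)!$ orbits of size $n$. Choosing from each orbit the representative ending in $n$ identifies it with a unique $\tau \in \symm_{n-1}$, and a direct check gives $\widetilde{\des}_1(\tau_1\cdots\tau_{n-1}n) = \des(\tau)+1$, since the position $n-1$ is never a descent but the wrap-around position $n$ always is. Therefore
\[
\sum_{\sigma \in \symm_n} q^{\widetilde{\des}_1(\sigma)} = n\sum_{\tau \in \symm_{n-1}} q^{\des(\tau)+1} = nqA_{n-1}(q),
\]
and combining the three steps yields $G_{n,k}(q) = nq^{1-k}A_{n-1}(q)$. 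The main subtlety will be in the first step, namely correctly identifying the wrap-around contributions as the \emph{complement} of width-$(n-k)$ descents — this is what converts $\widetilde{\des}_k$ into the signed difference $\des_k - \des_{n-k}$ and produces the offset $+k$ responsible for the $q^{1-k}$ prefactor. Once that identification is in place, the coprimality hypothesis supplies a clean bijective reduction and the remainder is routine.
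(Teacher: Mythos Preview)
Your argument is correct, and in fact it proves what the paper only states as a conjecture: the paper offers no proof at all, merely computational verification for $n\le 9$ and all $k$ coprime to $n$. So there is no ``paper's approach'' to compare against---you have resolved an open problem posed in the text.

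For the record, each of your three steps checks out. The identity $\widetilde{\des}_k(\sigma)=\des_k(\sigma)+k-\des_{n-k}(\sigma)$ follows exactly as you say, since the $k$ wrap-around positions $i\in\{n-k+1,\dots,n\}$ contribute precisely when the corresponding $j=i-(n-k)\in[k]$ is \emph{not} a width-$(n-k)$ descent. The reindexing bijection $\sigma\mapsto\sigma'$ via $b_i=a_{\pi(i)}$ with $\pi(i)=((i-1)k\bmod n)+1$ is well-defined exactly because $\gcd(k,n)=1$, and the verification $\pi(\tau_1(i))=\tau_k(\pi(i))$ is straightforward, giving $\widetilde{\des}_1(\sigma')=\widetilde{\des}_k(\sigma)$. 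Finally, the cyclic-rotation argument for $\sum_{\sigma\in\symm_n}q^{\widetilde{\des}_1(\sigma)}=nqA_{n-1}(q)$ is classical and your sketch is accurate: the free $\ZZ/n\ZZ$-action partitions $\symm_n$ into $(n-1)!$ orbits, and the representative ending in $n$ has cyclic descent number $\des(\tau)+1$ for the corresponding $\tau\in\symm_{n-1}$.

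The only cosmetic point is that in your final paragraph you might emphasize that the rotation action is on \emph{positions} (i.e., $a_1\cdots a_n\mapsto a_2\cdots a_na_1$), since rotation of \emph{values} does not in general preserve $\widetilde{\des}_1$; but your choice of representative (``ending in $n$'') makes clear you have the right action in mind.
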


Several illustrative polynomials are given in Table~\ref{tab:difference data}.
This does not hold more generally, and it would be interesting to determine if a general formula exists.

\begin{table}\label{tab:difference data}
{\renewcommand{\arraystretch}{1.2}
\[	\begin{array}{c|ccc}
		k & G_{6,k} & G_{8,k}(q) & G_{9,k}(q) \\ \hline
		1 & 6A_5(q) & 8A_7(q) & 9A_9(q) \\
		2 & 180A_2(q)^2 & 1120A_3(q)^2 & 9q^{-1}A_9(q) \\
		3 & 6! & 8q^{-2}A_7(q) & 45360A_2(q)^3 \\
		4 & 180q^{-2}A_2(q)^2 & 8! & 9q^{-3}A_9(q) \\
		5 & 6q^{-4}A_5(q) & 8q^{-4}A_7(q) & 9q^{-4}A_9(q) \\
		6 & & 1120q^{-4}A_3(q)^2 & 45360q^{-3}A_2(q)^3 \\
		7 & & 8q^{-6}A_7(q) & 9q^{-6}A_9(q) \\
		8 & & & 9q^{-7}A_9(q) \\
	\end{array}
\]
}
\caption{The polynomials $G_{n,k}(q)$ for $n=6,8,9$ and $1 \leq k \leq n$.}
\end{table}

\begin{question}
	For which values of $n,k$ does there exist a closed formula for $G_{n,k}(q)$, and what is the formula?
\end{question}


\section{Pattern Avoidance}\label{sec:avoidance}

We say that a permutation $\sigma \in \symm_n$ \emph{contains the pattern} $\pi \in \symm_m$ if there exists a subsequence $\sigma'$ of $\sigma$ such that $\std(\sigma') = \pi$.
If no such subsequence exists, then we say that $\sigma$ \emph{avoids the pattern} $\pi$.
If $\Pi \subseteq \symm$, then we say $\sigma$ \emph{avoids} $\Pi$ if $\sigma$ avoids every element of $\Pi$.
The set of all permutations of $\symm_n$ avoiding $\Pi$ is denoted $\Av_n(\Pi)$.
In a mild abuse of notation, if $\Pi = \{\pi\}$, we will write $\Av_n(\pi)$.

In this section, we consider the functions
\[
	F_n^{\st}(\Pi;q) = \sum_{\sigma \in \Av_n(\Pi)} q^{\st \sigma},
\]
which specializes to $F_n^{\st}(q)$ if $\Pi = \emptyset$.
In most instances, $F_n^{\des_k}$ will be the main focus, but $F_n^{\des_K}$ and $F_n^{\inv_k}$ will also make appearances.

An important concept within pattern avoidance is that of Wilf equivalence.
Two sets $\Pi,\Pi' \subset \symm$ are said to be \emph{Wilf equivalent} if $|\Av_n(\Pi)| = |\Av_n(\Pi')|$ for all $n$.
In this case, we write $\Pi \equiv \Pi'$ to denote this Wilf equivalence, which is indeed an equivalence relation.
For example, it is known \cite{KnuthVol1,MacMahonCA} that whenever $\pi, \pi' \in \symm_3$, then $|\Av_n(\pi)| = |\Av_n(\pi')| = C_n$, where
\[
	C_n = \frac{1}{n+1}\binom{2n}{n}
\]
is the $n^{th}$ Catalan number.

Proving whether $\Pi \equiv \Pi'$ is often quite difficult, and their Wilf equivalence does not imply that $F_n^{\st}(\Pi;q) = F_n^{\st}(\Pi';q)$.
However, in some instances, the problems of establishing these identities have straightforward solutions by applying basic transformations on the elements of the avoidance classes.
Given $\pi = a_1\dots a_n \in \symm_m$, let $\pi^r$ denote its \emph{reversal} and $\pi^c$ denote its \emph{complement}, respectively defined by
\[
	\pi^r = a_ma_{m-1}\dots a_1 \text{ and } \pi^c = (m+1-a_1)(m+1-a_2)\dots (m+1-a_m).
\]
Similarly, given $\Pi \subseteq \symm$, we let
\[
		\Pi^r = \{\pi^c \mid \pi \in \Pi\} \text{ and } \Pi^c = \{\pi^r \mid \pi \in \Pi\}
\]
be the \emph{reversal} and \emph{complement} of $\Pi$, respectively.

Our results of this section begin with a multivariate generalization of $F_n^{\st}(\Pi;q)$, and with showing how its specializations describe relationships among $\Pi,\Pi^r$, and $\Pi^c$.

\begin{defn}
	Fix $\Pi \subseteq \symm$. Define
	\[
		T_n(\Pi; t_1,\ldots,t_{n-1}) = \sum_{\sigma \in \Av_n(\Pi)} \prod_{k = 1}^{n-1}t_k^{\des_k(\sigma)}.
	\]
\end{defn}

Several conclusions quickly follow. 
This function specializes to $F_n^{\des_K}(\Pi;q)$ by setting $t_i = q$ for $i \in K$ and $t_i = 1$ for all $i \notin K$.
We can also recover $F_n^{\inv_K}(\Pi;q)$ from $T_n$ by setting $t_i = q$ whenever $i \in [n-1] \cap k\ZZ$ for some $k \in K$, and setting $t_i = 1$ otherwise.
Additionally, when $\Pi = \{\pi\}$, a nice duality appears, providing a mild generalization of Lemma~2.1 from \cite{DokosEtAl}.

\begin{prop}\label{prop:comp reverse rotate}
	For any $\Pi \subseteq \symm$, let $\Pi'$ denote either $\Pi^r$ or $\Pi^c$.	
	We then have
	\[
		T_n(\Pi';t_1,\ldots,t_{n-1}) = t_1^{n-1}t_2^{n-2}\cdots t_{n-1}T_n(\Pi;t_1^{-1},\ldots,t_{n-1}^{-1}).
	\]
	Consequently,
	\[
		T_n((\Pi^r)^c;t_1,\ldots,t_{n-1}) = T_n(\Pi;t_1,\ldots,t_{n-1}).
	\]
\end{prop}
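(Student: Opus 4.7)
The plan is to extract the identity from two elementary facts: (i) reversal and complementation alter $\des_k$ in a predictable way, and (ii) each of these two operations induces a bijection between pattern-avoidance classes.

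First I would verify that for any $\sigma = a_1 \cdots a_n \in \symm_n$ and any $k \in [n-1]$,
\[
\des_k(\sigma^r) = (n-k) - \des_k(\sigma) = \des_k(\sigma^c).
\]
The complement case is immediate: a width-$k$ descent of $\sigma^c$ at index $i$ means $n+1-a_i > n+1-a_{i+k}$, i.e.\ $a_i < a_{i+k}$, which is a width-$k$ ascent of $\sigma$ at $i$; since every $i \in [n-k]$ is either a width-$k$ descent or a width-$k$ ascent of $\sigma$, the count formula follows. For the reversal, writing $\sigma^r = b_1 \cdots b_n$ with $b_i = a_{n+1-i}$, the condition $b_i > b_{i+k}$ becomes $a_{n+1-i} > a_{n+1-i-k}$, and the substitution $j = n+1-i-k$ gives a bijection $[n-k] \to [n-k]$ sending width-$k$ descents of $\sigma^r$ to width-$k$ ascents of $\sigma$.

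Next I would invoke the standard fact that $\sigma$ avoids $\pi$ if and only if $\sigma^r$ avoids $\pi^r$ (respectively, $\sigma^c$ avoids $\pi^c$), so that reversal and complementation induce bijections $\Av_n(\Pi) \to \Av_n(\Pi^r)$ and $\Av_n(\Pi) \to \Av_n(\Pi^c)$. Reindexing the sum defining $T_n(\Pi';t_1,\ldots,t_{n-1})$ along such a bijection and substituting the exponent identity from the first step gives
\begin{align*}
T_n(\Pi';t_1,\ldots,t_{n-1}) &= \sum_{\sigma \in \Av_n(\Pi)} \prod_{k=1}^{n-1} t_k^{(n-k) - \des_k(\sigma)} \\
&= \left(\prod_{k=1}^{n-1} t_k^{n-k}\right) T_n(\Pi; t_1^{-1}, \ldots, t_{n-1}^{-1}),
\end{align*}
and since $\prod_{k=1}^{n-1} t_k^{n-k} = t_1^{n-1} t_2^{n-2} \cdots t_{n-1}$, this is the claimed identity.

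For the consequence, I would apply the main identity twice: first treating $(\Pi^r)^c$ as the complement of $\Pi^r$, then $\Pi^r$ as the reversal of $\Pi$. The two prefactors combine to $\prod_k t_k^{n-k} \cdot \prod_k (t_k^{-1})^{n-k} = 1$, and the two substitutions $t_k \mapsto t_k^{-1}$ cancel, recovering $T_n(\Pi;t_1,\ldots,t_{n-1})$. The only mildly delicate step is the bookkeeping in the first part---verifying that $i \mapsto n+1-i-k$ is a well-defined involution of $[n-k]$ and that every index in $[n-k]$ is accounted for by the descent/ascent dichotomy---but there is no real obstacle.
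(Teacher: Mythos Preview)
Your proposal is correct and follows essentially the same approach as the paper: establish $\des_k(\sigma^c) = \des_k(\sigma^r) = (n-k) - \des_k(\sigma)$, reindex the defining sum via the bijection $\Av_n(\Pi) \leftrightarrow \Av_n(\Pi')$, and then apply the identity twice for the consequence. Your write-up is in fact a bit more explicit than the paper's (you spell out the index substitution $j = n+1-i-k$ for the reversal and the cancellation of prefactors in the second part), but the underlying argument is identical.
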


\begin{proof}
	It is enough to prove the claim when the set of patterns being avoided is $\{\pi\}$ for some $\pi \in \symm_m$, since the full result follows by applying the argument to all elements of $\Pi$ simultaneously.
	
	First consider when $\pi' = \pi^c$ and $\sigma \in \Av_n(\pi)$. 
	Because $\sigma \in \Av_n(\pi)$ if and only if $\sigma^c \in \Av_n(\pi^c)$,
	we have that for each $k$, $i \in \Des_k(\sigma)$ if and only if $i \notin \Des_k(\sigma^c)$.
	This implies $\Des_k(\sigma^c) = [n-k] \setminus \Des_k(\sigma)$, hence $\des_k(\sigma^c) = n-k-\des_k(\sigma)$.
	So,
	\begin{eqnarray*}
		T_n(\pi^c;t_1,\ldots,t_{n-1}) &=& \sum_{\sigma \in \Av_n(\pi^c)} \prod_{k = 1}^{n-1}t_k^{\des_k(\sigma)} \\
							  &=& \sum_{\sigma \in \Av_n(\pi)} \prod_{k = 1}^{n-1}t_k^{\des_k(\sigma^c)} \\
							  &=& \sum_{\sigma \in \Av_n(\pi)} \prod_{k = 1}^{n-1}t_k^{n-k-\des_k(\sigma)} \\
							  &=& t_1^{n-1}t_2^{n-2}\cdots t_{n-1}T_n(\pi;t_1^{-1},\ldots,t_{n-1}^{-1}).
	\end{eqnarray*}
	Proving that the result holds for $\pi' = \pi^r$ follows similarly.
	
	The second identity in the proposition statement holds by applying the first identity twice: first for $\Pi^r$ and then for $\Pi^c$. 
\end{proof}

The above identities significantly reduce the amount of work needed to study $F_n^{\des_K}(\Pi;q)$ for all $\Pi \subseteq \symm_n$.
For the remainder of this paper, we systematically approach $\Pi$ for $|\Pi| \leq 2$.


\subsection{Avoiding singletons}

By Proposition~\ref{prop:comp reverse rotate}, we immediately get
\[
	F_n^{\des_k}(123;q) = q^{n-k}F_n^{\des_k}(321;q^{-1})
\]
and
\[
	F_n^{\des_k}(132;q) = F_n^{\des_k}(213;q) = q^{n-k}F_n^{\des_k}(231;q^{-1}) = q^{n-k}F_n^{\des_k}(312;q^{-1}).
\]
So, studying $F_n^{\des_k}(\pi;q)$ for $\pi \in \symm_3$ reduces to studying the function for a choice of one pattern from $\{123,321\}$ and one from the remaining patterns.
For some choices of $\Pi$, the permutations in $\Av_n(\Pi)$ are especially highly structured, which leads to similar arguments throughout the rest of this paper.

We begin with $\pi = 312$. 
Notice that if $a_1\dots a_n \in \Av_n(312)$ and $a_i = 1$, then $\std(a_1\dots a_{i-1}) \in \Av_{i-1}(312)$, $\std(a_{i+1}\dots a_n) \in \Av_{n-i}(312)$, and 
\[
	\max\{a_1,\ldots,a_{i-1}\} < \min\{a_{i+1},\ldots,a_n\}.
\]

\begin{prop}
	For all $n$,
	\[
		F_n^{\des_k}(312;q) = \sum_{i=1}^k C_{i-1}F_{n-i}^{\des_k}(312;q) + \sum_{i=k+1}^n qF_{i-1}^{\des_k}(312;q)F_{n-i}^{\des_k}(312;q)
	\]
	where $C_i$ is the $i^{th}$ Catalan number.
\end{prop}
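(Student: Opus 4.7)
The plan is to recover this recursion from the standard first-return decomposition of $312$-avoiding permutations based on the position of the value $1$, then track how width-$k$ descents behave across the decomposition. Write $\sigma = a_1\cdots a_n \in \Av_n(312)$ and let $i$ be the position of $1$. As noted in the excerpt, the constraint forces $\{a_1,\dots,a_{i-1}\} = \{2,\dots,i\}$ and $\{a_{i+1},\dots,a_n\} = \{i+1,\dots,n\}$, with the left block $L = \std(a_1\cdots a_{i-1}) \in \Av_{i-1}(312)$ and right block $R = \std(a_{i+1}\cdots a_n) \in \Av_{n-i}(312)$ independent. Summing over $i$ and over all choices of $L$ and $R$ will give the recursion, provided we can express $\des_k(\sigma)$ in terms of $\des_k(L)$ and $\des_k(R)$.

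The crucial step is the case analysis on a potential width-$k$ descent position $j \in [n-k]$. I would classify $j$ by the location of $j$ and $j+k$ relative to $i$. If both indices lie strictly on the same side of $i$, the pair contributes a width-$k$ descent of $\sigma$ if and only if the corresponding pair contributes one to $L$ or $R$, because standardization preserves relative order. If $j = i$ or $j+k = i$ with $j \ne i$, we check directly: when $j = i$ the value $a_j = 1$ is minimal so no descent occurs, but when $j+k = i$ with $j \ge 1$ we have $a_j \ge 2 > 1 = a_{j+k}$, yielding one guaranteed descent. Finally, if $j < i < j+k$ then $a_j \in \{2,\dots,i\}$ and $a_{j+k} \in \{i+1,\dots,n\}$, so $a_j < a_{j+k}$ and no descent occurs.

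Splitting on whether $i \le k$ or $i \ge k+1$ now finishes the argument. When $i \le k$, no $j \ge 1$ satisfies $j + k = i$, so the interface contributes nothing, and moreover no pair of width-$k$ descent indices can lie inside the left block (which has size $< k$); thus $\des_k(\sigma) = \des_k(R)$. The left block can be any of the $C_{i-1}$ permutations in $\Av_{i-1}(312)$, giving the first sum $\sum_{i=1}^k C_{i-1} F_{n-i}^{\des_k}(312;q)$. When $i \ge k+1$, the unique interface pair $(i-k, i)$ is always a descent, and all remaining width-$k$ descents partition cleanly into those inside $L$ and those inside $R$, giving $\des_k(\sigma) = \des_k(L) + 1 + \des_k(R)$; summing over independent $L$ and $R$ produces the factor $qF_{i-1}^{\des_k}(312;q) F_{n-i}^{\des_k}(312;q)$.

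The only place that genuinely requires care, and the step I would treat as the main obstacle, is confirming the case $j < i < j+k$: one must verify that the $312$-avoidance condition really forces the left block's values to lie below the right block's values, so that such a pair never produces a descent. Once that observation is in hand, combining the two cases gives the claimed recursion directly.
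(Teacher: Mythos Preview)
Your proposal is correct and follows essentially the same approach as the paper: decompose $\sigma \in \Av_n(312)$ by the position $i$ of the value $1$, use the block inequality $\max\{a_1,\dots,a_{i-1}\} < \min\{a_{i+1},\dots,a_n\}$ to rule out cross-block descents, and split into the cases $i \le k$ and $i > k$. Your write-up is in fact more careful than the paper's, since you explicitly classify all possible positions of a width-$k$ pair $(j,j+k)$ relative to $i$ and verify each case, whereas the paper leaves the straddling case $j < i < j+k$ implicit.
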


\begin{proof}
	First consider when $\sigma = a_1\dots a_n \in \Av_n(312)$ and $a_i = 1$ for some $i \leq k$.
	By the discussion preceding this proposition, $j \notin \Des_k(\sigma)$ for any $j \leq i$.
	So, none of the $C_{i-1}$ possible permutations that make up $\std(a_1\dots a_{i-1})$ contribute to $\des_k(\sigma)$.
	The only contributions to $\des_k(\sigma)$ come from $\std(a_{i+1}\dots a_n) \in \Av_{n-i}(312;q)$.
	The overall contribution to $F_n^{\des_k}(312;q)$ is the first summand of the identity.
	
	Now suppose $a_i = 1$ for some $i > k$.
	Each choice of $a_1\dots a_i$, contributes to $\des_k(\sigma)$ as usual, but there will be an additional width-$k$ descent produced at $i-k$.
	The elements $a_{i+1}\dots a_n$ contribute to $\des_k(\sigma)$ as before. 
	The overall contribution to $F_n^{\des_k}(312;q)$ is the first summand of the identity.
	Since we have considered all possible indices $i$ for which we could have $a_i = 1$, we add the two cases together and are done.
\end{proof}

Note that when we set $q=1$, the above recursion specializes to the well-known recursion for Catalan numbers $C_{n+1} = \sum_{i=0}^n C_iC_{n-i}$.

We can use the previous proposition in conjunction with Proposition~\ref{prop:comp reverse rotate} to produce formulae for $F_n^{\des_k}(\Pi;q)$ and $F_n^{\inv_k}(\Pi;q)$ whenever $\Pi$ is a single element of $\symm_3$ other than $123$ or $321$. 
The only nontrivial work required, then, is to compute the degrees of the two polynomials.

\begin{cor}
	For all $n$, 
	\[
		\deg F_n^{\des_k}(312;q) = n-k \text{ and } \deg F_n^{\inv_k}(312;q) = \sum_{i=1}^{n-k} \left\lfloor \frac{n-i}{k} \right\rfloor.
	\]
\end{cor}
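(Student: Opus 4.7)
The plan is to observe that for any permutation statistic $\st$, the degree of $F_n^{\st}(\Pi;q)$ equals $\max_{\sigma \in \Av_n(\Pi)} \st(\sigma)$, since the leading coefficient of the sum $\sum_\sigma q^{\st(\sigma)}$ is the (positive) count of permutations achieving the maximum. So both identities reduce to computing these maxima over $\Av_n(312)$.

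First I would establish crude upper bounds that in fact hold over all of $\symm_n$. Since $\Des_k(\sigma) \subseteq [n-k]$, we have $\des_k(\sigma) \le n-k$ automatically. For the inversion statistic, $\Inv_k(\sigma)$ consists of pairs $(i,j)$ with $j-i$ a positive multiple of $k$; for fixed $i$, the number of such $j$ in $[n]$ is exactly $\lfloor (n-i)/k \rfloor$, and summing over $i$ (noting that the summand vanishes once $i > n-k$) gives $\inv_k(\sigma) \le \sum_{i=1}^{n-k} \lfloor (n-i)/k \rfloor$.

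Next I would show both upper bounds are simultaneously attained by the decreasing permutation $\sigma_0 = n(n-1)\cdots 1$, which lies in $\Av_n(312)$ (indeed, it avoids every non-decreasing pattern of length $\ge 2$ outside the strictly decreasing one). For $\sigma_0$ every pair $(i,j)$ with $i<j$ satisfies $a_i > a_j$, so every index in $[n-k]$ is a width-$k$ descent and every pair with $j-i$ a positive multiple of $k$ is a width-$k$ inversion. This gives $\des_k(\sigma_0) = n-k$ and $\inv_k(\sigma_0) = \sum_{i=1}^{n-k} \lfloor (n-i)/k \rfloor$, matching the upper bounds and yielding the claimed degrees.

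There is no real obstacle here; the argument is immediate once one spots that the reverse permutation is a $312$-avoider that trivially maximizes both statistics. The only point requiring a brief check is the summation range for the inversion count, namely the elementary observation that $\lfloor (n-i)/k \rfloor = 0$ precisely when $i > n-k$.
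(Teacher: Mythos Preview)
Your proposal is correct and follows essentially the same approach as the paper: both arguments identify the decreasing permutation $n(n-1)\cdots 21 \in \Av_n(312)$ as the maximizer of both statistics and read off the degrees from its width-$k$ descent and inversion counts. Your write-up is simply more explicit about the trivial upper bounds and the summation range for $\inv_k$, whereas the paper leaves these implicit.
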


\begin{proof}
	The degrees of the above polynomials are given by identifying a permutation in $\Av_n(312)$ with the most possible descents.
	This is satisfied by $n(n-1)\dots 21 \in \Av_n(312)$ which has $n-k$ descents of width $k$.
	Determining the number of width-$k$ inversions in this permutation is done similarly.
\end{proof}

Although $321$ is Wilf equivalent to $312$, it is not so obvious how to construct a recurrence relation for $F_n^{\des_k}(321;q)$.
To help describe the elements of $\Av_n(321)$, first let $\sigma = a_1\dots a_n \in \symm$ and call $a_i$ a \emph{left-right maximum} if $a_i > a_j$ for all $j < i$.
Thus $\sigma \in \Av_n(321)$ if and only if its set of non-left-right maxima form an increasing subsequence of $\sigma$. 
Indeed, if the non-left-right maxima did contain a descent, then there would be some left-right maximum preceding both elements, which violates the condition that $\sigma$ avoid $321$.
Despite such a description, using it to reveal $F_n^{\des_k}(321;q)$ has thus far been unsuccessful.
So, we leave the following as an open question.

\begin{question}
	Is there a closed formula or simple recursion for $F_n^{\des_k}(321;q)$ (equivalently, for $F_n^{\des_k}(123;q)$)?
\end{question}


\subsection{Avoiding doubletons}

At this point, we begin studying the functions $F_n^{\des_k}(\Pi;q)$ when avoiding doubletons from $\symm_3$.
Recall that, by the Erd\H{o}s-Szekeres theorem \cite{ErdosSzekeres}, there are no permutations in $\symm_n$ for $n \geq 5$ that avoid both $123$ and $321$.
Thus, we will not consider this pair.
Additionally, the functions $F_n^{\inv_k}(\Pi;q)$ are quite unwieldy, so we will not consider these either.

For our first nontrivial example, we begin with $\{123,132\}$.
Permutations $a_1\dots a_n \in \Av_n(123,132)$ have the following structure: 
for any $i = 1,\dots, n$, if $a_i = n$, then the substring $a_1a_2\dots a_{i-1}$ is decreasing 
and consists of the elements from the interval $[n-i+1,n-1]$.
Additionally, the substring $a_{i+1}\dots a_n$ is an element of $\Av_{n-i}(123,132)$.
This structure makes it easy to show that there are $2^{n-1}$ elements of $\Av_n(123,132)$ \cite{SimionSchmidt}.

\begin{prop}\label{prop: 123 132}
	For all $n$ and $1 \leq k \leq n-1$,
	\begin{eqnarray*}
		F_n^{\des_k}(123,132;q) &=& \sum_{i=1}^k q^{\min(i,n-k)}F_{n-i}^{\des_k}(123,132;q) \\
							&& + \sum_{i=k+1}^{n-k} q^{\min(i-1,n-k-1)}F_{n-i}^{\des_k}(123,132;q) \\
							&& + 2^{n-\max(k+1,n-k+1)}q^{n-k-1}.
	\end{eqnarray*}
\end{prop}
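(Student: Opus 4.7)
The plan is to condition on the position $i$ of the value $n$ in $\sigma = a_1\cdots a_n \in \Av_n(123,132)$, using the structural description recalled just before the proposition: the prefix is forced to be $a_1a_2\cdots a_{i-1} = (n-1)(n-2)\cdots(n-i+1)$, the entry $a_i = n$, and $\sigma' := \std(a_{i+1}\cdots a_n)$ is an arbitrary element of $\Av_{n-i}(123,132)$. In particular every entry in positions $1,\ldots,i$ strictly exceeds every entry in positions $i+1,\ldots,n$.

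First I would split $\des_k(\sigma) = e(i) + \des_k(\sigma')$, where $e(i)$ counts width-$k$ descents at positions $j \le i$ (the remaining descents $j > i$ correspond bijectively under $\std$ to width-$k$ descents of $\sigma'$). A position $j \le i$ with $j+k \le n$ is a width-$k$ descent unless $j+k = i$, i.e.\ $j = i-k$, which falls in range only when $i > k$; for in that single case $a_{i-k} < n = a_i$, while in every other case either both of $a_j,a_{j+k}$ lie in the strictly decreasing prefix, or one lies in the prefix and the other in the suffix and the prefix–suffix separation forces $a_j > a_{j+k}$. The number of admissible $j$ in $\{1,\ldots,i\}$ is $\min(i,n-k)$, so
\[
e(i) = \begin{cases} \min(i, n-k), & 1 \le i \le k,\\ \min(i, n-k) - 1, & k < i \le n. \end{cases}
\]

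Next I would organize $F_n^{\des_k}(123,132;q) = \sum_{i=1}^n q^{e(i)} F_{n-i}^{\des_k}(123,132;q)$ into three ranges. For $1 \le i \le k$ the exponent is $\min(i,n-k)$, yielding Sum~1. For $k < i \le n-k$ we have $i \le n-k$, so $\min(i,n-k)-1 = i-1 = \min(i-1,n-k-1)$, yielding Sum~2. For $i > \max(k, n-k)$ the exponent collapses to the constant $n-k-1$, and since $n-i < \min(k,n-k) \le k$ every $\sigma' \in \Av_{n-i}(123,132)$ satisfies $\des_k(\sigma') = 0$, so $F_{n-i}^{\des_k}(123,132;q) = |\Av_{n-i}(123,132)|$. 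Using $|\Av_m(123,132)| = 2^{m-1}$ for $m \ge 1$ and $|\Av_0| = 1$, the telescoping identity
\[
\sum_{i=\max(k,n-k)+1}^{n-1} 2^{n-i-1} + 1 = 2^{n - \max(k,n-k) - 1}
\]
collapses this tail into the single closed-form term $2^{n-\max(k+1,n-k+1)}q^{n-k-1}$, which is Sum~3.

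The main obstacle is keeping the case analysis tidy at the boundaries: I would separately sanity-check the degenerate cases $n = 2k$ and $n < 2k$, where Sum~2 is empty and the ranges for Sum~1 and Sum~3 abut, by noting that $\max(k+1,n-k+1) = k+1$ exactly when $n \le 2k$ and equals $n-k+1$ exactly when $n \ge 2k$, so the two endpoints agree at $n = 2k$ and only Sum~1 and Sum~3 are needed when $n < 2k$. A small numerical check (e.g.\ $n = 4$, $k = 2$) confirms that the recursion reproduces the correct polynomial $3q + 5q^2$, which is the kind of verification I would use to guard against off-by-one errors in the boundary ranges.
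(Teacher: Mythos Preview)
Your proof is correct and follows essentially the same approach as the paper: both condition on the position $i$ of the value $n$, use the forced decreasing prefix to count the width-$k$ descents contributed by positions $j\le i$ (with the single exception $j=i-k$ when $i>k$), and then collapse the tail range $i>\max(k,n-k)$ into a closed form using $|\Av_m(123,132)|=2^{m-1}$. Your write-up is slightly more explicit about the formula $e(i)=\min(i,n-k)-[i>k]$ and about the boundary cases $n\le 2k$, but the argument is the same.
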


\begin{proof}
	Let $\sigma = a_1\dots a_n \in \Av_n(123,132;q)$.
	If $a_i = n$ for $i \leq k$, then it is clear from the preceding description of the elements in $\Av_n(123,132)$ that all of $1,\dots,\min(i,n-k)$ are elements of $\Des_k(\sigma)$.
	If $j > n-k$ for some $j$, then $j \notin \Des_k(\sigma)$ since $a_{j+k}$ does not exist. 
	The remaining elements of $\Des_k(\sigma)$ arise as a width-$k$ descent of $a_{i+1}\dots a_n$, which accounts for the factor of $F_{n-i}^{\des_k}(123,132;q)$ in the first summand.
	
	Now, if $k+1 \leq n-k$ and $a_i = n$ for $i = k+1,\dots,n-k$, then every element of $1,\ldots,i$ except $i-k$ is an element of $\Des_k(\sigma)$.
	If $k+1 > n-k$, then the second summand is empty and nothing is lost by continuing to the case of $a_i = n$ for $i \geq \max(k+1,n-k+1)$.
	Again, the remaining elements of $\Des_k(\sigma)$ are the width-$k$ descents of $a_{i+1}\dots a_n$, hence the additional factor of $F_{n-i}^{\des_k}(123,132;q)$.
	This accounts for the second summand.
	
	Finally, let $m = \max(k+1,n-k+1)$.
	If $a_i = n$ for $i \geq m$, then all of $1,\ldots,n-k$ are descents except $i-k$, and these are the only possible width-$k$ descents.
	In particular, none of $i+1,i+2,\dots ,n$ can be the index for a width-$k$ descent.
	This leads to the sum
	\[
		\sum_{i = m}^n q^{n-k-1}|\Av_{n-i}(123,132)| = \left(1 + \sum_{i = m}^{n-1} 2^{n-i-1}\right)q^{n-k-1} = 2^{n-m}q^{n-k-1},
	\]
	which accounts for the third summand.
\end{proof}

Next, we consider $\{123,312\}$.
We proceed similarly as before but with some minor differences, reflecting the new structure we encounter.
If $\sigma = a_1\dots a_n \in \Av_n(123,312)$ and $a_i = 1$ for some $i < n$, then
$\sigma$ is of the form
\[
	\sigma = i(i-1)\dots 21n(n-1)\dots (i+2)(i+1),
\]
since neither subsequence $a_1\dots a_{i-1}$ and $a_{i+1}\dots a_n$ may contain an ascent.
If $a_n = 1$, though, then $\std(a_1\dots a_{n-1}) \in \Av_{n-1}(123,312)$.

\begin{prop}
	For all $n$ and $1 \leq k < n$,
	\begin{eqnarray*}
		F_n^{\des_k}(123,312;q) &=& \sum_{i=1}^k q^{\max(0,n-k-i)} + \sum_{i=k+1}^{n-1} q^{\max(n-2k,i-k)} \\
							&& + qF_{n-1}^{\des_k}(123,312;q)
	\end{eqnarray*}
\end{prop}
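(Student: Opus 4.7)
The plan is to condition on the position $i$ for which $a_i = 1$, splitting into the two structural cases laid out in the paragraph preceding the proposition: either $i = n$, in which case $\std(a_1\cdots a_{n-1}) \in \Av_{n-1}(123, 312)$; or $i < n$, in which case $\sigma$ is forced to be the rigid permutation $\sigma = i\,(i-1)\cdots 2\,1\,n\,(n-1)\cdots (i+1)$.

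For the trailing-$1$ case, a width-$k$ descent of $\sigma$ at any index $j \le n - k - 1$ coincides with a width-$k$ descent of $a_1\cdots a_{n-1}$, and hence of $\std(a_1\cdots a_{n-1})$, since standardization preserves relative order. The single remaining index $j = n - k$ always produces an additional width-$k$ descent because $a_{n-k} \ge 2 > 1 = a_n$. Summing $q^{\des_k(\sigma)}$ over all such $\sigma$ therefore contributes exactly $qF_{n-1}^{\des_k}(123,312;q)$, the last term of the recurrence.

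For the case $i < n$, I would compute $\des_k(\sigma)$ directly for the rigid permutation by partitioning the admissible indices $j \in [n-k]$ into three ranges according to where $j$ and $j+k$ sit relative to $i$. If $j + k \le i$, both entries lie in the opening descending block and always form a descent, contributing $\max(0, i - k)$ descents in total. If $j \le i < j + k$, substituting $a_j = i - j + 1$ and $a_{j+k} = n - j - k + i + 1$ gives $a_{j+k} - a_j = n - k > 0$, so these straddling pairs contribute nothing. If $i < j$, both entries lie in the closing descending block and again always form a descent, contributing $\max(0, n - k - i)$. Summing yields
\[
    \des_k(\sigma) = \max(0, i - k) + \max(0, n - k - i).
\]

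For $1 \le i \le k$ the first term vanishes, leaving $\max(0, n-k-i)$ and producing the first sum of the recurrence. For $k+1 \le i \le n-1$ the expression becomes $(i-k) + \max(0, n-k-i)$, which I would simplify to $\max(n-2k,\,i-k)$ by splitting on whether $i \le n - k$ (giving $n-2k$) or $i > n - k$ (giving $i-k$); this produces the second sum. Combining the three contributions gives the claimed identity. The only genuine obstacle is the index bookkeeping: one must keep straight how the three ranges interact with the bounds $1$, $k$, $n-k$, $n-1$, and verify that the piecewise exponent coming out of the case analysis collapses to exactly the single $\max$ appearing in the statement. Once the rigid form of $\sigma$ is in hand, the rest is a routine case split.
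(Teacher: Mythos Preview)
Your proof is correct and follows essentially the same approach as the paper: both condition on the position $i$ with $a_i=1$, obtain the recursive term $qF_{n-1}^{\des_k}(123,312;q)$ from the case $i=n$, and count width-$k$ descents directly in the rigid permutation for $i<n$. Your three-range partition of the indices $j\in[n-k]$ and the resulting formula $\des_k(\sigma)=\max(0,i-k)+\max(0,n-k-i)$ make the case analysis more explicit than the paper's version, which for the second summand simply refers back to an analogous argument in the preceding proposition.
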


\begin{proof}
	Suppose $a_i = 1$ for some $i \leq k$.
	Using the description of elements in $\Av_n(123,312)$, we know there are $n-k-i$ width-$k$ descents.
	Since there is only one such permutation for each $i$, we simply add all of the $q^{n-k-i}$, so long as $n-k-i \geq 0$.
	If this inequality does not hold, then there are no width-$k$ descents in this range.
	This accounts for the first summand.
	
	The second summand is computed very similarly to that of the second summand in Proposition~\ref{prop: 123 132}.
	The final summand is a direct result of noting that when $a_n = 1$, then $(a_1-1)\dots (a_{n-1}-1)$ may be any element of $\Av_{n-1}(123,312;q)$, which accounts for the factor $F_{n-1}^{\des_k}(123,312;q)$.
	For each of these choices, we know $n-k \in \Des_k(\sigma)$, hence the factor of $q$.
	Adding the sums results in the identity claimed.
\end{proof}

Now we consider $\{132,231\}$.
Note that if $a_1\dots a_n \in \Av_n(132,231)$, then $a_i \neq n$ for any $1 < i < n$.
If $a_1 = n$, then $\std(a_2\dots a_n) \in \Av_{n-1}(132,231)$, and similarly if $a_n = n$.
Once again, this allows us to quickly compute that $|\Av_n(132,231)| = 2^{n-1}$.

\begin{prop}\label{prop: 132 231}
	Let $K = \{k_1,\ldots,k_l\}$ be a nonempty subset of $[n-1]$ whose elements are listed in increasing order.
	We then have
	\[
		F_n^{\des_K}(132,231;q) = \prod_{i=1}^{l+1} (1+q^{i-1})^{k_i - k_{i-1}}
	\]
	where $k_0 = 1$ and $k_{l+1} = n$.
\end{prop}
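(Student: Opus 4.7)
The plan is to exploit the rigid structure of permutations in $\Av_n(132, 231)$ described just before the proposition statement: the maximum element $n$ must lie at position $1$ or position $n$, and deleting it leaves a permutation in $\Av_{n-1}(132, 231)$. Iterating this observation gives a bijection
\[
\phi \colon \Av_n(132, 231) \to \{L, R\}^{n-1},
\]
where the $j$th bit $b_j$ records, at the step in which the current largest remaining element is removed, whether that element occupied the left (L) or right (R) end of its window.

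The crux is the following lemma, which I would prove by induction on $n$: for every $k \in [n-1]$,
\[
\des_k(\sigma) = |\{j \in [n-k] \mid b_j = L\}|,
\]
where $b = \phi(\sigma)$. If $b_1 = L$ then $\sigma = n\tau$ with $\tau \in \Av_{n-1}(132,231)$ encoded by $(b_2, \ldots, b_{n-1})$; position $1$ is automatically a width-$k$ descent because $a_1 = n$, and for $i \geq 2$ the inequality $a_i > a_{i+k}$ translates verbatim into the corresponding inequality for $\tau$ at position $i-1$, giving $\des_k(\sigma) = 1 + \des_k(\tau)$. If $b_1 = R$ then $\sigma = \tau n$; position $n-k$ is never a width-$k$ descent since $a_n = n$, and the remaining comparisons are inherited directly from $\tau$, giving $\des_k(\sigma) = \des_k(\tau)$. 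In either case, combining the inductive hypothesis with the value of $b_1$ produces the claimed count.

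Because the bits are independent, the lemma factors the generating function as
\[
F_n^{\des_K}(132, 231; q) = \prod_{j=1}^{n-1}(1 + q^{m_j}), \qquad m_j := |\{k \in K \mid k \leq n - j\}|.
\]
The exponent $m_j$ is a weakly decreasing step function of $j$ taking the value $i \in \{0, 1, \ldots, l\}$ on precisely the interval $n - k_{i+1} < j \leq n - k_i$, which has length $k_{i+1} - k_i$ under the conventions $k_0 = 1$ and $k_{l+1} = n$. Grouping factors by the value of $m_j$ and reindexing $i \mapsto i+1$ converts the product into $\prod_{i=1}^{l+1}(1 + q^{i-1})^{k_i - k_{i-1}}$, which is the claimed formula.

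I expect the main friction to come from the inductive step of the lemma: one must confirm that the index shift by one in the $b_1 = L$ case and the exclusion of position $n-k$ in the $b_1 = R$ case both mesh cleanly with the range $[n-k]$, and also handle the edge $k = n - 1$ without an off-by-one slip. Once the lemma is nailed down, the remainder of the argument is a routine interval count read directly off the staircase of values $m_j$.
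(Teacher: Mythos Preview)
Your proof is correct and follows essentially the same approach as the paper: both peel off the element $n$ from either end of $\sigma$ and track how this contributes to $\Des_K$. The paper phrases this directly as the recurrence $F_n^{\des_K}(132,231;q) = (1+q^l)F_{n-1}^{\des_K}(132,231;q)$ and iterates, dropping the largest $k_i$ from $K$ once $n$ falls to $k_l$; your version unrolls the same recursion into an explicit bijection with $\{L,R\}^{n-1}$, proves the per-$k$ lemma, and reads off the product $\prod_j(1+q^{m_j})$ in one stroke. The underlying idea is identical, with your presentation being slightly more granular.
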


\begin{proof}
	It follows from the description of elements in $\Av_n(132,231)$ that there are two summands in a recurrence for $F_n^{\des_K}(132,231;q)$: one corresponding to $a_1 = n$ and one corresponding to $a_n = n$.
	When $a_1 = n$, then there are $|K| = l$ copies of $1 \in \Des_K(\sigma)$; if $a_n = n$, then $a_n$ makes no contribution to $\Des_K(\sigma)$.
	Thus, by deleting $n$ from $\sigma$, we get the recurrence
	\[
		F_n^{\des_K}(132,231;q) = (1+q^l)F_{n-1}^{\des_K}(132,231;q).
	\]
	Repeating this, a factor of $1+q^l$ appears until we get to
	\[
		F_n^{\des_K}(132,231;q) = (1+q^l)^{n-k_l}F_{k_l}^{\des_K}(132,231;q).
	\]
	At this point, note that 
	\[
		F_{k_l}^{\des_K}(132,231;q) = F_{k_l}^{\des_{K \setminus \{k_l\}}}(132,231;q).
	\]
	Repeating the previous argument ends up with the identity claimed.	
\end{proof}

Next, consider $\{132,213\}$.
If $\sigma = a_1\dots a_n \in \Av_n(132,213)$, then if $a_i = n$ for any $i$, then $a_1\dots a_{i-1}$ must be increasing in order to avoid $213$.
Moreover,
\[
	\max(a_{i+1},\dots,a_n) < a_1
\]
in order to avoid $132$, and $\std(a_{i+1}\dots a_n) \in \Av_{n-i}(132,213)$.

\begin{prop}
	For all $n$ and $1 \leq k < n$,
	\begin{eqnarray*}
		F_n^{\des_k}(132,213;q) &=& \sum_{i=1}^k q^{\min(i,n-k)}F_{n-i}^{\des_k}(132,213;q) \\
						&& + \sum_{i = k+1}^{n-k} q^{\min(k,n-i)}F_{n-i}^{\des_k}(132,213;q) \\
						&& + \sum_{i = \max(k+1,n-k+1)}^n q^{n-i}F_{n-i}^{\des_k}(132,213;q).
	\end{eqnarray*}
\end{prop}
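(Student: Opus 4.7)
The plan is to condition on the position $i$ of the largest entry $n$ in $\sigma \in \Av_n(132,213)$, using the structural description given just before the proposition. Because $a_1 \ldots a_{i-1}$ is increasing and every suffix entry is strictly less than $a_1$, the prefix is forced to be $a_j = n - i + j$ for $1 \leq j \leq i - 1$, while $\std(a_{i+1}\ldots a_n)$ ranges freely over $\Av_{n-i}(132,213)$.

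With this structure in hand, I would split the width-$k$ descents of $\sigma$ into those at positions $j > i$ and those at positions $j \leq i$. The former are exactly the width-$k$ descents of the suffix, contributing the factor $F_{n-i}^{\des_k}(132,213;q)$ to the generating polynomial. The ``boundary'' contributions at $j \leq i$ are determined by a short case analysis: when $j < i$ and $j + k \leq i$, the entry $a_{j+k}$ is either a larger prefix entry or equals $n$, so $a_{j+k} \geq a_j$ and $j$ is not a width-$k$ descent; when $j < i$ and $j + k > i$, the entry $a_{j+k}$ lies in the suffix and is therefore smaller than $a_1 \leq a_j$, so $j$ \emph{is} a width-$k$ descent; and $j = i$ is a width-$k$ descent provided $i \leq n - k$. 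This yields
\[
\#\{\text{boundary descents of } \sigma\} \;=\; \bigl|\{j : \max(1, i-k+1) \leq j \leq \min(i-1, n-k)\}\bigr| + [i \leq n-k].
\]

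I would then simplify this count into three $i$-regimes: it equals $\min(i, n-k)$ for $1 \leq i \leq k$, equals $k$ for $k+1 \leq i \leq n-k$, and equals $n-i$ for $i \geq \max(k+1, n-k+1)$. Summing $q^{(\text{boundary count})} F_{n-i}^{\des_k}(132,213;q)$ over all positions $i \in [n]$ of $n$ and grouping $i$ by these three regimes reproduces the three sums in the proposition exactly.

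The main obstacle is the bookkeeping when $2k \geq n$, in which case the middle range is empty and positions $i \in (n-k, k]$ must be absorbed by the first sum. Here the exponent $\min(i, n-k) = n-k$ coincides with the unified boundary count computed above, so the three ranges $[1,k]$, $[k+1,n-k]$, and $[\max(k+1,n-k+1), n]$ still tile $[n]$ without omission or overlap. Once this partition is checked in both the $2k < n$ and $2k \geq n$ cases, the identity follows directly from the structural description of $\Av_n(132,213)$ and the definition of $F_n^{\des_k}$.
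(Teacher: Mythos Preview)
Your proof is correct and follows exactly the approach the paper intends: the paper's own proof is the single sentence ``entirely analogous to the proof of Proposition~\ref{prop: 123 132},'' and what you have written is precisely that analogous argument, conditioning on the position $i$ of $n$ and carrying out the boundary-descent count in the three $i$-regimes. Your explicit verification of the $2k \geq n$ edge case and the tiling of $[n]$ by the three ranges is more careful than anything the paper provides.
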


\begin{proof}
	The proof of this is entirely analogous to the proof of Proposition~\ref{prop: 123 132}.
\end{proof}

Next, we consider $\{132,312\}$.
For each $i = 1,\ldots, n-1$, either $a_{i+1} = \max\{a_1,\ldots,a_i\} + 1$ or $a_{i+1} = \min\{a_1,\ldots,a_i\} - 1$.
This doubleton often results in especially pleasant formulas, and our results are no exception.

\begin{prop}\label{prop: 132 312}
	Let $K = \{k_1,\ldots,k_l\}$ be a nonempty subset of $[n-1]$ whose elements are listed in increasing order.
	We then have
	\[
		F_n^{\des_K}(132,312;q) = \prod_{i=1}^{l+1} (1+q^{i-1})^{k_i - k_{i-1}}
	\]
	where $k_0 = 1$ and $k_{l+1} = n$.
\end{prop}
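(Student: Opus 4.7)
The plan is to exploit the binary-sequence structure of $\Av_n(132,312)$ highlighted just before the statement. Each $\sigma = a_1\cdots a_n \in \Av_n(132,312)$ is uniquely encoded by a string $(s_2,\dots,s_n) \in \{U,D\}^{n-1}$, where $s_i = U$ if $a_i$ exceeds all of $a_1,\ldots,a_{i-1}$ and $s_i = D$ if $a_i$ is less than all of $a_1,\ldots,a_{i-1}$; the starting value $a_1$ is then forced, by the requirement that the letters form a permutation of $[n]$, to equal $1$ plus the number of $D$'s in the string. This sets up a bijection $\Av_n(132,312) \leftrightarrow \{U,D\}^{n-1}$, which in passing recovers the count $2^{n-1}$.

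The central lemma I would prove is: for each $i \in [n-k]$, one has $i \in \Des_k(\sigma)$ if and only if $s_{i+k} = D$. To see this, observe first that for every $j$ the set $\{a_1,\ldots,a_j\}$ is a contiguous interval $[m_j, M_j]$ with $a_j \in \{m_j, M_j\}$ (immediate induction using the structural rule). The letter $a_{i+k}$, which is inserted at step $i+k$, therefore equals $M_{i+k-1}+1 > M_i \geq a_i$ when $s_{i+k} = U$, and equals $m_{i+k-1}-1 < m_i \leq a_i$ when $s_{i+k} = D$. Thus the sign of $a_i - a_{i+k}$ depends only on $s_{i+k}$, not on the intermediate choices.

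The remainder is bookkeeping. For each $j \in \{2,\ldots,n\}$, if $s_j = D$ then it contributes exactly $|K \cap [j-1]|$ to $\des_K(\sigma)$ (one unit for every $k \in K$ with $k \leq j-1$, so that $j-k \in [n-k]$); if $s_j = U$ it contributes $0$. Since the bits $s_j$ vary independently, the generating function factors as
\[
F_n^{\des_K}(132,312;q) = \prod_{j=2}^{n}\bigl(1 + q^{|K \cap [j-1]|}\bigr).
\]
Grouping the factors by the value of $|K \cap [j-1]|$, which equals $i-1$ precisely when $k_{i-1} < j \leq k_i$ under the conventions $k_0 = 1$ and $k_{l+1} = n$, contributes $k_i - k_{i-1}$ copies of $(1+q^{i-1})$ for each $i \in \{1,\ldots,l+1\}$ and yields the claimed product. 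The only step with real content is the descent lemma; everything afterward is immediate, so I do not anticipate a serious obstacle.
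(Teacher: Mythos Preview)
Your proof is correct and essentially the same as the paper's: the paper deletes the last letter (noting $a_n \in \{1,n\}$) to obtain the recurrence $F_n^{\des_K}(132,312;q) = (1+q^l)F_{n-1}^{\des_K}(132,312;q)$ and then iterates, which is precisely the $j=n$ factor of your product followed by the product over $j \leq n-1$. Your bit-string encoding and descent lemma simply unroll this recursion into a single direct computation, making the independence of the $s_j$'s explicit rather than implicit.
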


\begin{proof}
	From the description of elements $\sigma = a_1\dots a_n \in \Av_n(132,312)$, we know that either $a_n = 1$ or $a_n = n$.
	In the former case, $n-k \in \Des_K(\sigma)$ for each $k \in K$, and in the latter case, $n-k \notin \Des_K(\sigma)$ for each $k \in K$.
	This leads to the recurrence
	\[
		F_n^{\des_K}(132,312;q) = (1+q^l)F_{n-1}^{\des_K}(132,312;q).
	\]
	Following an analogous argument as in the proof of Proposition~\ref{prop: 132 231} obtains the result.
\end{proof}

From the general formula given above, we can quickly determine $F_n^{\inv_k}(132,312;q)$.

\begin{cor}
	For fixed $n,k$, write $n = dk + r$ for unique nonnegative integers $d,r$ such that $0 \leq r < k$.
	We then have
	\[
		F_n^{\inv_k}(132,312;q) = F_n^{\inv_k}(132,231;q) = 2^{k-1}(1+q^d)^r\prod_{i = 1}^{d-1} (1+q^i)^k.
	\]
\end{cor}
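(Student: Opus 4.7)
The plan is to observe that $F_n^{\inv_k}(\Pi;q)$ is itself a special case of $F_n^{\des_K}(\Pi;q)$, so both Propositions~\ref{prop: 132 231} and \ref{prop: 132 312} apply directly and yield the same polynomial. The only remaining task is to simplify the resulting product.

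First I would recall (as noted at the start of Section~2) that $\inv_k(\sigma) = \sum_{m \geq 1} \des_{mk}(\sigma)$, since every width-$k$ inversion $(i, i + mk)$ is a width-$(mk)$ descent for exactly one $m$. Equivalently, if we define $K = \{k, 2k, 3k, \ldots\} \cap [n-1]$, then $\inv_k(\sigma) = \des_K(\sigma)$ as integers, and hence
\[
F_n^{\inv_k}(\Pi;q) = F_n^{\des_K}(\Pi;q)
\]
for any pattern set $\Pi$.

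Next I would apply Propositions~\ref{prop: 132 231} and \ref{prop: 132 312} to this particular $K$. List its elements as $k_1 < k_2 < \cdots < k_l$ with $k_i = ik$. Writing $n = dk + r$, one gets $l = d$ when $r \geq 1$ and $l = d-1$ when $r = 0$. The consecutive gaps appearing in the product $\prod_{i=1}^{l+1}(1+q^{i-1})^{k_i - k_{i-1}}$ (with $k_0 = 1$, $k_{l+1} = n$) are then: $k_1 - k_0 = k - 1$, $k_i - k_{i-1} = k$ for $2 \leq i \leq l$, and $k_{l+1} - k_l$ equal to $r$ (if $r \geq 1$) or $k$ (if $r = 0$). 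Since both propositions give the same formula, one immediately obtains
\[
F_n^{\inv_k}(132,312;q) = F_n^{\inv_k}(132,231;q).
\]

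Finally I would collect the factors: the $(1+q^0)^{k-1} = 2^{k-1}$ term comes from $k_1 - k_0$, the $k$ repetitions of $(1+q^i)$ for $i = 1, \ldots, d-1$ come from the interior gaps, and the tail $k_{l+1} - k_l$ produces $(1+q^d)^r$ when $r \geq 1$ or is absorbed into the product $\prod_{i=1}^{d-1}(1+q^i)^k$ when $r = 0$ (in which case the $(1+q^d)^0 = 1$ convention makes both cases uniform). This yields the claimed closed form. The only genuine subtlety is the boundary case $r = 0$, where the last gap has size $k$ rather than $r$; I expect this to be the main (minor) obstacle, and I would address it by either handling it as a separate subcase or by noting that the $r=0$ formula collapses consistently under the convention that an empty factor equals $1$.
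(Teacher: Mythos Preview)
Your proposal is correct and follows essentially the same approach as the paper: the paper's proof simply invokes the identity $F_n^{\inv_k}(\Pi;q) = F_n^{\des_{[n-1]\cap k\ZZ}}(\Pi;q)$ and cites Propositions~\ref{prop: 132 231} and~\ref{prop: 132 312}, leaving the product simplification implicit. Your write-up is more detailed in carrying out the gap computation and in handling the $r=0$ boundary case, but the underlying argument is identical.
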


\begin{proof}
	This is a direct consequence of the general formulas from Propositions~\ref{prop: 132 231} and \ref{prop: 132 312}, and recalling that
	\[
		F_n^{\inv_k}(\Pi;q) = F_n^{\des_{[n-1] \cap k\ZZ}}(\Pi;q).
	\]
\end{proof}

\subsection*{Acknowledgement} The author thanks Bruce Sagan for his many helpful comments.

\bibliographystyle{plain}
\bibliography{references}

\end{document}